\definecolor{linkred}{rgb}{0.48,0.1,0.05}
\definecolor{linkblue}{RGB}{16, 78, 139}
	\titlespacing{\section}{0pt}{12pt}{0pt}
	\titlespacing{\subsection}{0pt}{6pt}{0pt}
\long\def\@footnotetext#1{%
\H@@footnotetext{%
\ifHy@nesting 
\hyper@@anchor{\@currentHref}{#1}%
\else 
\Hy@raisedlink{\hyper@@anchor{\@currentHref}{\relax}}#1%
\fi 
}}
\def\@footnotemark{%
\leavevmode 
\ifhmode\edef\@x@sf{\the\spacefactor}\nobreak\fi 
\H@refstepcounter{Hfootnote}%
\hyper@makecurrent{Hfootnote}%
\hyper@linkstart{link}{\@currentHref}%
\@makefnmark 
\hyper@linkend 
\ifhmode\spacefactor\@x@sf\fi 
\relax 
}%
\renewcommand*\@footnotemark{%
\leavevmode 
\ifhmode 
\edef\@x@sf{\the\spacefactor}%
\FN@mf@check 
\nobreak 
\fi 
\H@refstepcounter{Hfootnote}%
\hyper@makecurrent{Hfootnote}%
\hyper@linkstart{link}{\@currentHref}%
\@makefnmark 
\hyper@linkend 
\ifFN@pp@towrite 
\FN@pp@writetemp 
\FN@pp@towritefalse 
\fi 
\FN@mf@prepare 
\ifhmode\spacefactor\@x@sf\fi 
\relax%
}%
\theoremstyle{plain}
\newtheorem{theorem}{Theorem}[section]
\newtheorem{lemma}[theorem]{Lemma}
\newtheorem{corollary}[theorem]{Corollary}
\theoremstyle{definition}
\newtheorem{remark}[theorem]{Remark}
\newcommand{\N}{{\mathbb N}}
\newcommand{\Z}{{\mathbb Z}}
\newcommand{\C}{{\mathbb C}}
\newcommand{\CC}{{\mathcal C}}
\newcommand{\G}{{\mathcal G}}
\newcommand{\rank}{{\rm rank}}
\long\def\symbolfootnote[#1]#2{\begingroup%
\def\thefootnote{\fnsymbol{footnote}}\footnote[#1]{#2}\endgroup}
\def\blfootnote{\xdef\@thefnmark{}\@footnotetext}
\begin{document}

{\Large \bfseries \sc Curve graphs on surfaces of infinite type}

{\bfseries Ariadna Fossas\symbolfootnote[1]{\normalsize Research supported by ERC grant agreement number 267635 - RIGIDITY} and Hugo Parlier\symbolfootnote[2]{\normalsize Research supported by Swiss National Science Foundation grant number PP00P2\textunderscore 128557
{\em 2010 Mathematics Subject Classification:} Primary: 57M50. Secondary: 05C63, 57M15. \\
{\em Key words and phrases:} curve complexes, infinite type surfaces, hyperbolic surfaces}
}

{\em Abstract.} We define and study analogs of curve graphs for infinite type surfaces. Our definitions use the geometry of a fixed surface and vertices of our graphs are infinite multicurves which are bounded in both a geometric and a topological sense. We show that the graphs we construct are generally connected, infinite diameter and infinite rank. 
\vspace{1cm}

\section{Introduction} \label{s:introduction}

The curve complex, the pants complex and a number of other simplicial complexes and graphs related to simple closed curves on finite type surfaces have been used in multiple contexts for the study of the Teichm\"uller and moduli space, mapping class groups and related topics. In particular the geometry of these complexes has played a part in both understanding the geometries of Teichm\"uller spaces and a geometric group theory approach to the study of the mapping class group. 

The Teichm\"uller theory of infinite type surfaces is not nearly as developed as the finite type case, but there have been a number of interesting results about geometric properties of such surfaces (see for instance \cite{Basmajian}) and their deformation spaces (see 
\cite{Basmajian-Kim,AlessandriniEtAl,AlessandriniEtAl2}). 

There are also recent results about simplicial complexes related to infinite type surfaces. The usual curve graph can of course be defined on an infinite type surface and for instance it is a result of Hernandez and Valdez that the mapping class group is the automorphism group of this graph under certain non-trivial conditions. From the coarse geometric point of view, as a metric space it is not particularly exciting as it has diameter $2$. In another direction, there is a recent result of Bavard \cite{Bavard} about a ray graph associated to infinite type planar surfaces that has infinite diameter and is Gromov hyperbolic. 

Our goal is to contribute to this setting by defining and studying another graph associated to infinite type surfaces. Roughly speaking vertices are multicurves whose complement has bounded complexity and relating vertices if they can be realized disjointly. Depending on how one makes the about sentence precise, the graph in question is generally disconnected.

We define our graphs with respect to fixed hyperbolic structure on a surface. In the case of finite type surfaces, this is equivalent to the usual setup, but in the case of infinite type surfaces, this makes a big different. To be precise, we require that our surfaces have a certain type of bounded geometry, namely that they admit a (generalized) pants decomposition where supremum of the lengths of the individual curves in the decomposition is bounded. Deformation spaces of such surfaces have been studied by Alessandrini et al. and have been called upper-bounded surfaces. 

We fix a surface $M$ with a hyperbolic metric as above. Now for each $K\in \N \cup \{\infty\}$, we get a graph $\G_K(M)$ where vertices are multicurves that also have this bounded property length and whose complementary regions have complexity at most $K$. Again, edges appear when the multicurves can be realized disjointly (the cases $K=0, \infty$ are special - see the next section for the precise definitions). Note that for finite type surfaces, our graphs for certain $K$ are essentially the curve graph, the pants graph and some sort of set of graphs "in between". In particular they are all connected and generally have interesting geometries.

It is not a priori obvious that these graphs are connected in the infinite type case (in fact without the bounded length property they aren't necessarily) so our first theorem is about the connectedness.

\begin{theorem}
For any upper-bounded $M$ and any $K \in \N \cup \{\infty\}$, the graph $\G_K(M)$ is connected.
\end{theorem}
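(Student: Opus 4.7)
The plan is to fix a reference bounded-length pants decomposition $\PP_0$ of $M$ provided by the upper-bounded hypothesis, and to prove that every vertex $\alpha$ of $\G_K(M)$ is joined to $\PP_0$ by a finite edge path. Since a pair of pants has trivial complexity, $\PP_0$ is itself a vertex of $\G_K(M)$ for every $K \in \N \cup \{\infty\}$, so connecting every vertex to $\PP_0$ suffices.

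The argument splits naturally into two parts. First, I would show that every vertex $\alpha$ of $\G_K(M)$ is adjacent to a bounded-length pants decomposition $\widetilde\alpha$. Each complementary component of $\alpha$ is a finite-type surface of complexity at most $K$ with bounded-length boundary; a uniform Bers-type theorem then provides a pants decomposition of each such component with curves of length bounded by a constant depending only on $K$ and the upper bound on $\alpha$'s lengths. Taking $\widetilde\alpha$ to be $\alpha$ together with these refinements produces a bounded-length pants decomposition of $M$ whose geodesic representative shares curves with, but has no transverse intersections with, that of $\alpha$; hence $\alpha$ and $\widetilde\alpha$ are disjointly realizable and thus adjacent in $\G_K(M)$.

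Second, I would show that any two bounded-length pants decompositions $\PP_1$ and $\PP_2$ of $M$ lie in the same component. The crucial feature to exploit is that an edge of $\G_K(M)$ can modify infinitely many curves at once (it only demands disjoint realizability), so a single step can bridge infinitely many local discrepancies. Since curves of bounded length are locally finite on a hyperbolic surface, $\PP_1 \cup \PP_2$ has only finitely many intersection points in any compact region. I would fix an exhaustion $M_1 \subset M_2 \subset \cdots$ of $M$ by finite-type subsurfaces whose boundaries lie in $\PP_1$, note that on each annular strip $M_{n+1} \setminus M_n$ the restrictions of $\PP_1$ and $\PP_2$ form finite pants decompositions of a fixed finite-type surface (connected by classical Hatcher-Thurston-type results), and then execute the corresponding finite-type modifications in parallel across all strips to produce a uniformly bounded chain of $\G_K(M)$-vertices from $\PP_1$ to $\PP_2$.

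The main obstacle will be the mismatch between finite-type pants-graph moves (elementary flips, in which the new curve crosses the old) and the disjointness condition defining edges of $\G_K(M)$. To resolve this I would introduce intermediate multicurves that are not themselves pants decompositions: by carefully deleting curves from $\PP_1$ and $\PP_2$ while keeping each complementary region of complexity at most $K$, one obtains auxiliary vertices of $\G_K(M)$ that serve as \emph{disjointness hubs} through which the parallel finite-type modifications can be routed. The bounded length and bounded complexity hypotheses are used throughout to guarantee uniform control, ensuring that all intermediate multicurves are genuine vertices of $\G_K(M)$ and that the overall path has finite length independent of the exhaustion.
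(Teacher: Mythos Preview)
Your first step---extending any vertex to a bounded-length pants decomposition via a uniform Bers-type bound---is exactly the paper's Lemma~3.1, and together with the observation that a $\G_0$-edge becomes a length-$2$ path in $\G_K$ this correctly reduces the theorem to connecting any two bounded-length pants decompositions.

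The second step, however, has a genuine gap. You take an exhaustion with boundaries in $\PP_1$ and assert that on each strip $M_{n+1}\setminus M_n$ ``the restrictions of $\PP_1$ and $\PP_2$ form finite pants decompositions of a fixed finite-type surface.'' This is false for $\PP_2$: its curves can cross the strip boundaries (which lie in $\PP_1$, not $\PP_2$), so $\PP_2$ restricted to a strip is a collection of arcs, not a pants decomposition, and the Hatcher--Thurston machinery does not apply directly. There is also no reason the strips share a single topological type, so ``a fixed finite-type surface'' is unwarranted. Finally, even granting local Hatcher--Thurston paths, you need a \emph{uniform} bound on their lengths across all strips to assemble a finite $\G_K$-path; local finiteness of $\PP_1\cup\PP_2$ in compacta is not the same as a uniform per-curve intersection bound, and nothing in your outline produces one.

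The paper avoids all of this by working entirely in $\G_0(M)$ and isolating the one quantitative fact that makes everything work: by the collar lemma, if all curves in $v$ and $w$ have length at most $L$, then every curve of $v$ meets $w$ in at most $L/C_L$ points (and symmetrically), a bound depending only on $L$. It then invokes a lemma from \cite{AndersonParlierPettet} saying that a uniform intersection bound $N$ forces $d_{\G_0}(v,w)\leq F(N)$. If you want to salvage your approach, the missing ingredient is precisely this uniform intersection bound: it would let you choose the exhaustion so that each curve of $\PP_2$ is trapped in a bounded-complexity $\PP_1$-subsurface, at which point a parallel-moves argument becomes feasible---but at that stage you are essentially reproving the cited lemma rather than bypassing it.
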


Again, from a geometric point of view, we don't want the graphs to be {\it too} connected - by which we mean finite diameter. For finite $K$ we show they aren't - and they are as far as possible from being Gromov hyperbolic. 

\begin{theorem}
For any upper-bounded $M$ of infinite type and $K\in \N$, $\rank(\G_K(M))= \infty$.
\end{theorem}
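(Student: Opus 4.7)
The plan is to establish, for each $n \in \N$, the existence of a quasi-isometric embedding $\Phi : \Z^n \hookrightarrow \G_K(M)$, with constants allowed to depend on $n$; this suffices to conclude $\rank(\G_K(M)) = \infty$. The strategy is analogous to the standard proof for the pants graph of a finite-type surface, but it must be adapted to the bounded-length constraint built into $\G_K(M)$: the $n$ independent directions are spread across $n$ pairwise disjoint \emph{infinite-type} subsurfaces of $M$, rather than across $n$ disjoint annuli (since Dehn twisting about a fixed short curve would eventually violate the length bound).

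First I would choose, using that $M$ is of infinite topological type and admits a bounded-length pants decomposition, pairwise disjoint subsurfaces $Z_1, \ldots, Z_n \subset M$, each of infinite type and with boundary curves drawn from the fixed pants decomposition. On the remainder of $M$ I fix once and for all a bounded-length multicurve $\mu_0$ whose complementary regions have complexity at most $K$. The core of the construction is, in each $Z_i$, to produce a bi-infinite sequence of multicurves $\mu_i^k$ ($k \in \Z$) of uniformly bounded length, with complement in $Z_i$ of complexity at most $K$, with consecutive members disjoint, and such that the sequence is a quasi-geodesic in $\G_K(Z_i)$. A model to keep in mind: fix an infinite chain of pants in $Z_i$ and let $\mu_i^k$ agree with $\mu_i^0$ outside an initial segment of length $|k|$ but flip each pants curve inside that segment to a short companion curve. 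With these lines in hand, set
\[
\Phi(k_1, \ldots, k_n) = \mu_0 \cup \mu_1^{k_1} \cup \cdots \cup \mu_n^{k_n},
\]
which by construction is a vertex of $\G_K(M)$.

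For the upper bound on $d_{\G_K(M)}(\Phi(\vec k), \Phi(\vec l))$, I would change one coordinate at a time; since modifications in $Z_i$ and $Z_j$ involve disjoint curves, each elementary step inside $Z_i$ directly yields an edge of $\G_K(M)$, so that $d_{\G_K(M)}(\Phi(\vec k), \Phi(\vec l)) \leq C \sum_i |k_i - l_i|$. For the lower bound, I would use the restriction map $\pi_i : \G_K(M) \to \G_K(Z_i)$, $\mu \mapsto \mu \cap Z_i$, which is $1$-Lipschitz because disjoint multicurves restrict to disjoint multicurves and the complexity bound on complementary regions is preserved once we arrange $\partial Z_i \subset \mu$. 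This gives
\[
d_{\G_K(M)}(\Phi(\vec k), \Phi(\vec l)) \geq \max_i d_{\G_K(Z_i)}\bigl(\mu_i^{k_i}, \mu_i^{l_i}\bigr) \geq c \max_i |k_i - l_i| \geq \frac{c}{n} \sum_i |k_i - l_i|,
\]
and combined with the upper bound yields a quasi-isometric embedding of $\Z^n$.

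The hard part will be the construction in each $Z_i$ of the bi-infinite quasi-geodesic of \emph{bounded-length} multicurves. The bounded-length requirement forbids the classical trick of iterating a Dehn twist about a short curve, so one has to genuinely exploit the infinite supply of short curves available in an upper-bounded infinite-type surface. The delicate point is to verify that the sequence $(\mu_i^k)$ cannot be short-circuited in $\G_K(Z_i)$; this is likely proved by a projection or filling argument showing that the discrepancy between $\mu_i^k$ and $\mu_i^0$ spans a segment of combinatorial length $|k|$, while a single elementary move in $\G_K(Z_i)$ can only resolve a bounded amount of that discrepancy.
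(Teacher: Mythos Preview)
Your proposal contains a genuine misunderstanding of the finite-length condition which leads you down an unnecessarily complicated path. The condition defining vertices of $\G_K(M)$ is that $\sup_{\alpha\in\mu}\ell(\alpha)<+\infty$, not that this supremum is bounded by a fixed constant independent of $\mu$. Hence adding a single curve of arbitrary length (for instance a high Dehn-twist image) to a bounded-length pants decomposition still produces a legitimate vertex. Your stated reason for avoiding finite-type subsurfaces and Dehn-twist type constructions (``would eventually violate the length bound'') is therefore not valid, and the detour through infinite-type $Z_i$ and the admittedly ``hard part'' of building quasi-geodesics of bounded-length multicurves inside them is not needed.

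More seriously, your lower-bound mechanism has a gap. The restriction $\pi_i:\G_K(M)\to\G_K(Z_i)$, $\mu\mapsto\mu\cap Z_i$, is not well defined on arbitrary vertices of $\G_K(M)$: a generic $\mu$ will meet $Z_i$ in arcs rather than closed curves, and even the closed curves of $\mu$ lying in $Z_i$ need not cut $Z_i$ into pieces of complexity $\le K$. Your parenthetical fix (``once we arrange $\partial Z_i\subset\mu$'') applies only to the vertices in the image of $\Phi$, not to the intermediate vertices of a geodesic in $\G_K(M)$ joining them, so it does not give a Lipschitz map on all of $\G_K(M)$.

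The paper avoids both issues. It takes $n$ pairwise disjoint \emph{finite-type} subsurfaces $M_1,\ldots,M_n$ of complexity exactly $K+1$, fixes a bounded-length pants decomposition $\mu$ of $M\setminus\bigcup M_i$, and embeds $\Z^n$ by sending $(k_1,\ldots,k_n)$ to $\mu\cup\alpha_1^{k_1}\cup\cdots\cup\alpha_n^{k_n}$ where $(\alpha_i^k)_k$ is a bi-infinite geodesic in the ordinary curve graph $\CC(M_i)$. The upper bound is immediate. For the lower bound the paper uses its Lemma on subsurface projection $\pi_{M_i}:\G_K(M)\to\CC(M_i)$ (any $v\in\G_K(M)$ must intersect $M_i$ since $\kappa(M_i)>K$, and arcs are turned into curves in the standard way), which is coarsely Lipschitz. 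This replaces your ill-defined restriction by a genuine projection and replaces the unproven quasi-geodesics in $\G_K(Z_i)$ by actual geodesics in well-understood finite-type curve graphs.
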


To show this we exhibit arbitrarily large quasi-flats via subsurface projections onto finite type subsurface curve graphs. We note that we don't know whether these graphs are quasi-isometric for different $K$. 

We conclude the paper with the example of a graph $G_\infty(Z)$, for a particular type of surface $Z$, which has finite diameter (at most $3$). It's not completely obvious that the diameter of this graph is bounded (it is proved in Theorem \ref{thm:Z}). The example is intriguing because it is in some sense the limit of infinite rank metric spaces.

\section{Preliminaries}\label{s:setup}

We begin by defining the graphs we will be studying. Let $M$ be an orientable hyperbolic surface with non-trivial fundamental group. In general $M$ will be a surface of infinite type but it is interesting to note that many of the definitions apply to finite type surfaces and give rise to some of the usual combinatorial graphs associated to curves on surfaces. It's important to note that, in contrast to the usual setting of curve graphs, we consider a fixed hyperbolic structure on $M$. We make the following further assumption on $M$: we assume that $M$ admits a geodesic pants decomposition such that the supremum of the lengths of the individual curves is finite. This condition on the metric is referred to as being {\it upper bounded} in \cite{AlessandriniEtAl} where the authors define and study Fenchel-Nielsen coordinates for this type of surface. It might be worth remarking - but we will not dwell on it here - that the only real requirement we need for any of our results is that we have a metric surface which is bi-lipschitz equivalent to a hyperbolic surface as described above.

When $M$ is of finite type, the complexity $\kappa(M)$ of $M$ is the number of curves in a pants decomposition of $M$. So if $M$ is homeomorphic to a surface of genus $g$ with $n$ boundary curves, then $\kappa(M) = 3g-3 + n$. 

We recall the definition of the usual curve graph $\C(M)$: vertices are isotopy classes of non-trivial simple closed curves and two vertices share an edge if they can be realized disjointly on $M$. In this context, when $M$ has boundary, non-trivial means non-isotopic to a point and non-peripheral to boundary. We think of $\C(M)$ as a geometric graph where edges have length $1$. Note that on a surface of infinite type, any two curves are distance at most $2$ in this graph. Its geometry - from this point of view - is somewhat limited. 

For general $M$ as above, and for an integer $K>0$, we define the following graph $\G_K(M)$:

- Vertices of $\G_K(M)$ are (geodesic) multicurves $\mu$ of $M$ such that any connected component $\Gamma$ of $M \setminus \mu$ satisfies $\kappa(\Gamma) \leq K$ (finite complexity condition) and
$$
\sup\{ \ell(\alpha_\mu) \,|\, \alpha_\mu \in \mu \mbox{ is a connected component of }\mu\}  < +\infty \mbox{ (finite length condition).}
$$

- Two vertices $\mu$ and $\mu'$ span an edge if they can be realized disjointly. 

We note that if $M$ is of finite type and $K = \kappa(M) -1$ then $\G_K(M)$ is the usual curve graph with extra vertices corresponding to all simplices.

For $K=0$ we define $\G_0(M)$ similarly. The vertex set is defined as previously (so in this case geodesic pants decompositions with finite supremum of individual curve lengths ) but the edge set is slightly different. 

For this we recall the definition of an {\it elementary move} between pants decompositions. Two pants decompositions $\mu, \mu'$ are related by an elementary move if they differ by exactly one curve and if the curves that distinguish them intersect minimally on the complexity $1$ subsurface which they share (see Figure \ref{fig:elementarymoves}).

\begin{figure}[htbp]
\begin{center}
\includegraphics{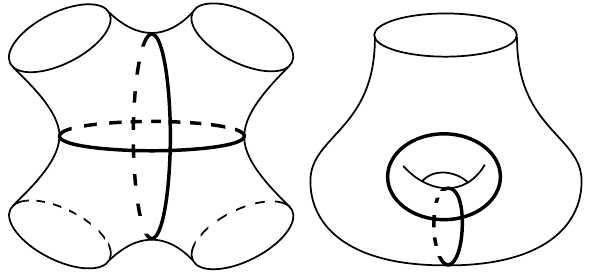}
\caption{The two types of elementary moves}
\label{fig:elementarymoves}
\end{center}
\end{figure}

Elementary moves can be performed {\it simultaneously} if they are performed on disjoint complexity $1$ subsurfaces. Two pants decompositions in $\G_0(M)$ share an edge if they differ by (a possibly infinite number) of simultaneous elementary moves. This graph is sometimes referred to as the {\it diagonal pants graph}.

For $K=\infty$, we define a graph $\G_\infty(M)$ as follows. Vertices are (geodesic) multicurves $\mu$ of $M$ such that 
$$
\sup \{ \kappa(\Gamma) \,|\, \Gamma \mbox{ is a connected component of }M\setminus \mu \}< \infty$$
and
$$
\sup\{ \ell(\alpha_\mu) \,|\, \alpha_\mu \in \mu \mbox{ is a connected component of }\mu\}  < +\infty.
$$

We'll be thinking of these graphs as metric graphs where edge lengths are all $1$ and we'll be interested in their geometry. 

{\bf Observation.} We point out that, by definition, if $0<K'<K$ then
$$
\G_{K'}(M) \subset \G_K(M).
$$
The vertices of $\G_0(M)$ lie in all $\G_K(M)$ but as any two elements of $\G_0(M)$ intersect, none of the edges of $\G_0(M)$ lie in $\G_K(M)$ for $K>0$. However, any pants decompositions that share an edge in $\G_0(M)$ differ on the complement of a multicurve where each connected component has complexity at most one. This means that they are at distance $2$ in any of the graphs $\G_K(M)$ for $K>0$. The converse is not (necessarily) true.

The first step will be to show that these graphs are connected and the previous observation will be crucial in showing that.

\section{Connectedness}\label{s:connectedness}

In this section we prove that the graphs we defined are all connected. 

We begin by showing that for any $\mu \in \G_K(M)$ there exists a pants decomposition $\mu_P \in  \G_K(M)$ that contains $\mu$. It is immediate that this is true when $M$ is of finite type. As such it is also immediate that there exists a geodesic pants decomposition which contains $\mu$ for any type of $M$ - what requires a proof is that one can choose this pants decomposition to lie in $\G_K(M)$ because if one chooses an arbitrary pants decomposition then it won't necessarily satisfy the finite length condition. We state the result as a lemma.

\begin{lemma}
For any $\mu \in \G_K(M)$ there exists a pants decomposition $\mu_P \in  \G_K(M)$ that contains $\mu$.
\end{lemma}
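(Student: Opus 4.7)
The plan is to extend $\mu$ to a pants decomposition by choosing a uniformly bounded-length pants decomposition on each complementary component, then taking the union. The key input is a Bers-type theorem for surfaces with geodesic boundary, which guarantees that a finite-type hyperbolic surface with bounded complexity and bounded boundary lengths admits a pants decomposition whose interior curves have length bounded by a constant depending only on the complexity and the boundary-length bound.

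First, I would dispatch the easy cases. If $K = 0$, then $\mu$ is already a pants decomposition and we take $\mu_P = \mu$. Otherwise, set
\[
L := \sup\{\ell(\alpha) : \alpha \text{ is a component of } \mu\} < \infty
\]
by the finite length condition, and let
\[
K_0 := \sup\{\kappa(\Gamma) : \Gamma \text{ is a component of } M \setminus \mu\},
\]
which is finite: it equals $K$ when $K \in \N$, and is finite by hypothesis when $K = \infty$.

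Next, cut $M$ along the geodesic multicurve $\mu$. Each connected component $\Gamma$ of $M\setminus\mu$ is a finite-type hyperbolic surface with geodesic boundary whose components are curves of $\mu$, hence each boundary curve has length at most $L$, and $\kappa(\Gamma) \le K_0$. By the Bers-type bound for surfaces with geodesic boundary (see, e.g., the proof of Bers' theorem in Buser's book, which proceeds inductively by cutting along short curves and readily adapts to the boundary setting), there exists a constant $B = B(K_0, L)$, independent of $\Gamma$, and a geodesic pants decomposition $\PP_\Gamma$ of $\Gamma$ in which every curve has length at most $B$.

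Finally, set $\mu_P := \mu \cup \bigcup_\Gamma \PP_\Gamma$, where the union is over all components $\Gamma$ of $M\setminus\mu$. Then $\mu_P$ is a geodesic pants decomposition of $M$ containing $\mu$, and
\[
\sup\{\ell(\beta) : \beta \text{ is a component of } \mu_P\} \le \max(L, B) < \infty,
\]
so the finite length condition is satisfied. Since the complement of any pants decomposition consists of pairs of pants (complexity $0$), the complexity condition $\kappa(\Gamma') \le K$ is trivially met for all $K \geq 0$ and for $K = \infty$. Therefore $\mu_P \in \G_K(M)$. The main obstacle in executing this plan is the Bers bound for surfaces with boundary, which is classical but must be invoked in a form uniform over $\Gamma$; this is where the upper-boundedness hypothesis and the complexity bound $K_0$ come together.
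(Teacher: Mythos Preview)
Your proof is correct and follows essentially the same approach as the paper: bound the complexity and boundary length of each complementary component uniformly, invoke a Bers--Buser type bound to get a uniformly short pants decomposition of each piece, and take the union. Your treatment is in fact slightly more careful than the paper's in that you explicitly handle the $K=\infty$ case by introducing $K_0$, whereas the paper's proof tacitly assumes finite $K$ when bounding the number of boundary curves by $K+2$.
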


\begin{proof}
This essentially follows from results on the length of pants decompositions. In particular, any surface of area $A$ and boundary length at most $B$ admits a pants decomposition of length at most a function of $A$ and $B$ (this follows from generalizations of results of Bers and Buser, see for instance \cite{BalacheffParlierSabourau}). 

Now let $L:= \sup\{ \ell(\alpha_\mu) \,|\, \alpha_\mu \in \mu \mbox{ is a connected component of }\mu\} $. Let $M'$ be a connected component of $M\setminus \mu$. As it is of complexity at most $K$, it has at most $K+2$ boundary curves. Each is of length at most $L$ so 
$$
\ell(\partial M') \leq (K+2) L.
$$
As $M'$ is hyperbolic, its area is also bounded above by a function of $K$. As such, by the result described above, $M'$ admits a pants decomposition where every curve has length bounded above by a function of $K$ and $L$. As this is true for all connected $M' \subset M \setminus \mu$, we obtain a pants decomposition of $M$ which contains $\mu$ and continues to enjoy the finite length property.
\end{proof}

We now prove the following theorem.

\begin{theorem}
For any $K \in \N \cup \{\infty\}$, the graph $\G_K(M)$ is connected.
\end{theorem}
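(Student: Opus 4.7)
The plan is to reduce the theorem to the case $K=0$ using the preceding lemma, and then to prove that the diagonal pants graph $\G_0(M)$ is connected.

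\textbf{Reduction.} Let $\mu, \mu' \in \G_K(M)$ with $K \in \N \cup \{\infty\}$. When $K = \infty$, any such $\mu$ actually lies in $\G_{K'}(M)$ with $K'$ the (finite) supremum of the complexities of the complementary regions of $\mu$, so the preceding lemma still applies. It yields pants decompositions $P, P' \in \G_0(M) \subseteq \G_K(M)$ with $\mu \subseteq P$ and $\mu' \subseteq P'$. Since containment provides a disjoint realization, $\mu \sim P$ and $\mu' \sim P'$ are edges in $\G_K(M)$ for $K \geq 1$ (for $K=0$ no extension is needed). By the Observation at the end of Section~\ref{s:setup}, every edge of $\G_0(M)$ gives rise to a path of length $2$ in $\G_K(M)$ through the common sub-multicurve, whose complementary regions all have complexity at most $1$. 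Hence connectedness of $\G_0(M)$ implies connectedness of $\G_K(M)$ for all $K \in \N \cup \{\infty\}$.

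\textbf{Connecting pants decompositions.} Given $P, P' \in \G_0(M)$, the aim is to produce a finite sequence of simultaneous elementary-move batches from $P$ to $P'$. The strategy is to exhaust $M$ by compact finite-type subsurfaces $M_1 \subset M_2 \subset \cdots$ whose boundary geodesics lie in a common sub-multicurve of $P$ and $P'$ (enlarging $P \cap P'$ via the lemma if necessary). On each $M_n$, the classical connectedness of the finite-type pants graph (Hatcher--Thurston) yields a finite sequence of elementary moves transforming $P \cap M_n$ into $P' \cap M_n$. The crucial point is that elementary moves supported on pairwise disjoint complexity-$1$ subsurfaces assemble into a single edge of $\G_0(M)$, so that infinitely many local differences spread throughout $M$ can be grouped into a simultaneous batch.

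\textbf{Main obstacle.} The central challenge is ensuring that the total number of batches is finite, given that $P$ and $P'$ may differ in infinitely many curves. The finite length condition on vertices of $\G_0(M)$ together with the upper-bounded hypothesis on $M$ control the geometry uniformly enough that, with the exhaustion chosen carefully, the local elementary-move sequences on the different $M_n$'s can be performed in parallel and organized into finitely many disjoint-support batches. Each such batch is then a single edge of $\G_0(M)$, producing the required finite path. Combined with the reduction above, this establishes the connectedness of $\G_K(M)$ for every $K \in \N \cup \{\infty\}$.
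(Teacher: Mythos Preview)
Your reduction to $K=0$ is correct and is exactly what the paper does: extend $\mu,\mu'$ to pants decompositions via the preceding lemma, and note that an edge in $\G_0(M)$ yields a path of length $2$ in $\G_K(M)$ for $K\geq 1$.

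The argument for connectedness of $\G_0(M)$, however, has genuine gaps. First, you assume an exhaustion $M_1\subset M_2\subset\cdots$ whose boundary curves lie in a \emph{common} sub-multicurve of $P$ and $P'$. In general $P\cap P'$ may be empty, and the preceding lemma (which only extends a given multicurve to a pants decomposition) gives you no mechanism to manufacture such shared curves. Second, and more seriously, the ``Main obstacle'' paragraph is a description of what must be shown, not a proof: you state that the finite length condition and the upper-bounded hypothesis ``control the geometry uniformly enough'' to organize the infinitely many local Hatcher--Thurston paths into finitely many simultaneous batches, but you never say how. Without an explicit uniform bound on the number of batches, the argument is incomplete.

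The paper closes this gap differently and avoids the exhaustion strategy altogether. It first uses the collar lemma together with the uniform length bound $L$ on curves of $v$ and $w$ to show that there is a constant $N_{v,w}$ (depending only on $L$) with $i(\alpha,w)\leq N_{v,w}$ and $i(\beta,v)\leq N_{v,w}$ for all $\alpha\in v$, $\beta\in w$. It then invokes a lemma from \cite{AndersonParlierPettet} (valid for both finite and infinite type surfaces) asserting the existence of a function $F:\N\to\N$ with $d_{\G_0(M)}(v,w)\leq F(N)$ whenever the intersection numbers are bounded by $N$. The two claims together immediately give finite distance. The key conceptual step you are missing is precisely this passage from a uniform length bound to a uniform intersection bound via the collar lemma; once that is in hand, the cited lemma does the remaining combinatorial work that your batch-organization sketch leaves open.
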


\begin{proof}

In light of the observation at the end of the preceding section, it suffices to prove that $\G_0(M)$ is connected. 

We begin by proving the following claim.

{\it Claim 1.} For any $v,w \in \G_0(M)$, there exists $N_{v,w}$ such that all curves $\alpha \in v$ and $\beta \in w$ satisfy
$$
i(\alpha, w) \leq N_{v,w} \mbox{ and } i(\beta, v) \leq N_{v,w}.
$$

To prove the claim, observe that the lengths of curves in $v$ and $w$ are uniformly bounded by a constant, say $L$. Each intersection point between a curve $\alpha \in v$ and $w$ forces $\alpha$ to enter the collar of a curve in $\beta$ and then leave again. By the collar lemma, the width of this collar is uniformly bounded below by a positive constant $C_L$ that only depends on $L$. As such, if $\alpha$ intersects $w$ at least $k$ times, its length must be greater than $k C_L.$ As such it follows that $k$ satisfies
$$
k < \frac{L}{C_L}.
$$
A symmetric argument works for the intersection between $\beta\in w$ and $v$ and this proves the claim. 

The key to the argument is the following claim.

{\it Claim 2.} There exists a positive function $F: \N \to \N$ such that if $v,w \in \G_0(M)$ and for all $\alpha\in v$ and all $\beta\in w$ satisfying 
$$i(\alpha, w) \leq N \mbox{ and } i(\beta, v) \leq N$$
then 
$$
d (v,w) \leq F(N)
$$
where $d$ denotes distance in $\G_0(M)$.

This is Lemma 4.4 from \cite{AndersonParlierPettet} which applies to both finite type and infinite type surfaces.

The result is a simple consequence of the two claims. 
\end{proof}

\section{Quasi-convexity of strata and infinite rank}

In this section we prove that for $K\in \N$, strata in $\G_K(M)$ corresponding to the set of multicurves containing a fixed multicurve $\mu$, are quasi-convex. Using this we are able to deduce that for all infinite type surfaces $M$, and all $K\in \N$, the graph $\G_K(M)$ is of infinite rank.

We begin with the following lemma. For convenience, we denote distance in $\G_K(M)$ by $d$.

\begin{lemma}\label{lem:proj}
Let $M'$ be a subsurface of $M$ of complexity $K'>K$. We denote by $\CC(M')$ the usual curve graph associated to $M'$. Then there exists a projection
$$
\pi_{M'} : \G_K(M) \to \CC(M')
$$
satisfying
$$
d_{\CC(M')} (\pi_{M'}(v),\pi_{M'}(w)) \leq 9 \, d(v,w).
$$
\end{lemma}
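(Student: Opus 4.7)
The plan is to adapt the classical Masur--Minsky subsurface projection construction to our setting, use it to send $\G_K$-adjacency to a bounded-distance relation in $\CC(M')$, and then iterate along a geodesic.

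\textbf{Step 1 --- Defining $\pi_{M'}$.} Let $v \in \G_K(M)$. Since every component $\Gamma$ of $M\setminus v$ satisfies $\kappa(\Gamma) \leq K < K' = \kappa(M')$, the subsurface $M'$ cannot be isotoped into a single complementary region of $v$. Hence $v$ meets $M'$ essentially, and $v \cap M'$ (realized in minimal position) is a non-empty disjoint union of essential simple closed curves and/or essential arcs in $M'$. I define $\pi_{M'}(v) \subset \CC(M')$ as the standard subsurface projection: essential closed curves of $v\cap M'$ are kept as vertices of $\CC(M')$, and each essential arc $\alpha$ is replaced by an essential boundary component of a regular neighbourhood of $\alpha \cup \partial M'$. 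This yields a non-empty subset of $\CC(M')$ of uniformly bounded diameter; the quoted projection distance refers to any choice of representatives.

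\textbf{Step 2 --- One-edge bound.} The heart of the argument is to show that whenever $v,w$ span an edge of $\G_K(M)$,
\[
d_{\CC(M')}\bigl(\pi_{M'}(v),\pi_{M'}(w)\bigr) \leq 9.
\]
For $K \geq 1$, edges are realized by disjoint multicurves, so $v \cap M'$ and $w \cap M'$ form disjoint arc/curve systems in $M'$. Two disjoint essential curves are at distance $1$ in $\CC(M')$; surgeries of two disjoint essential arcs along $\partial M'$ produce curves at distance at most $2$; the mixed arc/curve case is handled analogously. Combining these bounds with the diameter of the projection on each side yields a total bound safely under $9$. For $K=0$ the edge relation is given by simultaneous elementary moves on disjoint complexity-$1$ subsurfaces, which is not immediately a disjointness statement; the cleanest approach is to invoke the observation at the end of Section~\ref{s:setup}, which says that a $\G_0$-edge corresponds to a path of length $2$ through a common disjoint refinement in $\G_K(M)$ for any $K \geq 1$, and to run the disjoint-multicurve analysis there. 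Alternatively, one checks directly that an elementary move on a complexity-$1$ piece $S$ meeting $M'$ replaces a single arc/curve of $v \cap M'$ by one intersecting it minimally, which again moves the projection by a bounded number of steps.

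\textbf{Step 3 --- Iteration along a geodesic.} For arbitrary $v,w \in \G_K(M)$, take a geodesic $v = v_0, v_1, \dots, v_n = w$ in $\G_K(M)$ of length $n = d(v,w)$. Applying Step~2 to each consecutive pair and summing via the triangle inequality in $\CC(M')$,
\[
d_{\CC(M')}\bigl(\pi_{M'}(v),\pi_{M'}(w)\bigr) \;\leq\; \sum_{i=1}^{n} d_{\CC(M')}\bigl(\pi_{M'}(v_{i-1}),\pi_{M'}(v_i)\bigr) \;\leq\; 9n \;=\; 9\,d(v,w).
\]

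\textbf{Main obstacle.} Nearly all of the work sits in Step~2. The subtlety is the bookkeeping over all combinations (curve/curve, curve/arc, arc/arc) that can occur between $v \cap M'$ and $w \cap M'$, together with the fact that $v \cap M'$ itself may not be a single curve but a finite system that one needs to control coherently. A secondary difficulty is the $K=0$ case, where adjacency is not realized by disjointness but by simultaneous elementary moves, and one must either bypass this via the Section~\ref{s:setup} observation or directly verify the bounded-change property for an elementary move on a complexity-$1$ piece meeting $M'$. The constant $9$ is almost certainly not sharp; it is the safe constant that falls out of the surgery count once all cases are accounted for.
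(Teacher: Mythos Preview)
Your overall architecture (define the Masur--Minsky projection, bound what happens across one edge, iterate along a geodesic) matches the paper exactly. The difference lies in how the one-edge bound is obtained and, in particular, how the constant $9$ arises.

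The paper does not argue by case analysis of curve/curve, curve/arc, arc/arc as you propose. Instead it uses a single mechanism throughout: bound the geometric intersection number of any two curves in the subsurface projection, then convert this to curve-graph distance via the standard inequality $d_{\CC(M')}(\alpha,\beta)\leq 2\log_2 i(\alpha,\beta)+2$. For $K>0$, since $v\cup w$ is itself a multicurve, any two curves in its subsurface projection intersect at most $4$ times, giving a bound well under $9$ (indeed the paper notes one gets $6\,d(v,w)$ here). For $K=0$ the paper argues directly: because curves of $v$ and $w$ intersect at most twice under an elementary move, arcs $a\subset v\cap M'$ and $b\subset w\cap M'$ satisfy $i(a,b)\leq 2$, and keeping track of how arc-ends and arc-crossings contribute to the surgered curves gives $i(\pi_{M'}(v),\pi_{M'}(w))\leq 12$, whence distance at most $9$.

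Your route (a) for $K=0$ --- replacing a $\G_0$-edge by a length-$2$ path in $\G_1$ --- is clean but would double whatever constant you obtained for $K\geq 1$, so it cannot recover $9$ unless your $K\geq 1$ bound is at most $4.5$, which your sketch does not establish. Your route (b) is in spirit what the paper does, but the paper's execution via explicit intersection counts is what pins down the constant. Also, your assertion that ``surgeries of two disjoint essential arcs along $\partial M'$ produce curves at distance at most $2$'' is not obviously true as stated: such surgeries can intersect (up to $4$ times when endpoints share boundary components), and the distance bound then comes from the intersection inequality rather than from disjointness.
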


\begin{proof}
As $K'>K$, any $v\in \G_K(M)$ contains a curve $\alpha$ such that $\alpha \cap M' \neq \emptyset.$
Thus the following map is well defined: for any $v\in  \G_K(M)$ we define $\pi_{M'}(v)$ to be any single curve in the subsurface projection of $v$ to $M'$.

(Recall that a subsurface projection to $M'$ is the collection of isotopy classes of simple closed curves formed by an $\varepsilon$-neighborhood of $\{v\cap M'\} \cup \partial M'$.)

We observe that if any two curves $\alpha,\beta$ lie in the subsurface projection of the same multiarc $v$, then 
$$
i(\alpha,\beta) \leq 4.
$$

We state the following well known fact about the curve graph which can be shown by a cut and paste type argument (see for example \cite{Bowditch}).

{\it Fact.} Any two curves on a surface which intersect at most $k$ times are distance at most $2 \log_2(k)+2$ apart in the underlying curve complex. 

Suppose that $K>0$ and let $v$ and $w$ be vertices of $ \G_K(M)$ joined by an edge. Both $\pi_{M'}(v)$ and $\pi_{M'}(w)$ lie in the subsurface projection of the multicurve $v\cup w$ so by the above are distance at most $4$ in $\CC(M')$. The result then follows by induction.

Now if $K=0$, we argue a little bit differently. Let $v$ and $w$ share an edge in $ \G_0(M)$. If $a$ is an arc (or a curve) in $v\cap M'$ and $b$ an arc (or a curve) in $w\cap M'$ then
$$
i(a,b) \leq 2.
$$
Indeed both $a$ and $b$ are subsets of curves of $v$ and $w$ and any two curves in $v$ and $w$ intersect at most $2$ times. As a consequence if $\alpha= \pi_{M'}(v)$ and $\beta=\pi_{M'}(w)$
then 
$$
i(\alpha,\beta) \leq 12
$$
(each end of an arc can produce $2$ intersection points in the projection and each arc intersection point can produce $4$ in the projection). We deduce that $\alpha$ and $\beta$ are distance at most $9$ in curve graph of $M'$. Again the result follows from induction.
\end{proof}

\begin{remark} In the above proof we obtain a better bound (namely $6\, d(v,w)$) in the case of $K>0$ than in the case of $K=0$. It might be interesting to know by how much these constants can be improved. 
\end{remark}

An immediate consequence of the above lemma is the following.

\begin{corollary}
Let $M' \subset M$ be a subsurface of complexity $K'$. Then $\CC(M')$ (uniformly) quasi-isometrically embeds into $\G_K(M)$ for $K=K'-1$. 
\end{corollary}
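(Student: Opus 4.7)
The plan is to construct, for each $\alpha \in \CC(M')$, a vertex $f(\alpha) \in \G_K(M)$ that contains $\alpha$ and agrees with a fixed skeleton outside $M'$, and then to use the projection $\pi_{M'}$ from Lemma \ref{lem:proj} to show that $f$ is a quasi-isometric embedding.

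First I would fix once and for all a multicurve $\mu_0$ in $M \setminus M'$ with bounded supremum of lengths, containing $\partial M'$, and such that every connected component of $(M \setminus M') \setminus \mu_0$ has complexity at most $K$. Setting $f(\alpha) := \mu_0 \cup \{\alpha\}$, the complementary components of $f(\alpha)$ in $M$ are those lying outside $M'$ (complexity at most $K$ by the choice of $\mu_0$) together with the components of $M' \setminus \alpha$ inside $M'$ (of complexity $K' - 1 = K$). Together with the length bound, this places $f(\alpha)$ in $\G_K(M)$.

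Next I would verify the two directions of the quasi-isometric bound. For the upper bound: if $\alpha$ and $\beta$ are disjoint in $\CC(M')$ then $f(\alpha)$ and $f(\beta)$ are disjoint multicurves on $M$, hence span an edge in $\G_K(M)$; iterating along a geodesic in $\CC(M')$ gives $d(f(\alpha), f(\beta)) \leq d_{\CC(M')}(\alpha, \beta)$. For the lower bound, $\alpha$ is a non-peripheral simple closed curve in $M'$ and every other curve of $f(\alpha)$ lies in $M \setminus M'$, so the subsurface projection is simply $\pi_{M'}(f(\alpha)) = \alpha$; Lemma \ref{lem:proj} then yields $d_{\CC(M')}(\alpha, \beta) \leq 9 \, d(f(\alpha), f(\beta))$. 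The resulting quasi-isometric embedding has constants depending only on the constant $9$ of Lemma \ref{lem:proj}, which is the uniformity asserted in the statement.

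The main obstacle is the construction of $\mu_0$. Since $M \setminus M'$ is typically of infinite type, a bounded-length pants decomposition is not available for free. However, each connected component of $M \setminus M'$ has boundary contained in the finite multicurve $\partial M'$, which has finite total length, so one can adapt the Bers--Buser-type argument used in Lemma 3.1 to produce in each such component a bounded-length multicurve whose complementary regions have complexity at most $K$. A cleaner variant is probably to start with a pants decomposition $P \in \G_0(M)$, which exists by upper-boundedness, and then modify $P$ near $\partial M'$ by bounded-length surgeries to extract $\mu_0$; either way the upper-boundedness of $M$ is essential.
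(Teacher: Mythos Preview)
Your proposal is correct and follows essentially the same approach as the paper: the paper also fixes a bounded-length multicurve $\mu$ outside $M'$ (in fact a full pants decomposition of $M\setminus M'$), defines the embedding $\alpha \mapsto \alpha \cup \mu$, and appeals directly to Lemma~\ref{lem:proj} for the lower bound. Your discussion of the existence of $\mu_0$ is more careful than the paper's, which simply asserts that such a $\mu$ exists.
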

\begin{proof}
Let $\mu$ be a geodesic multicurve so that $M' $ is the only connected component of positive complexity of $M\setminus \mu$ (and such that $\mu \in \G_{K'}(M)$). Associated to $\mu$ is a natural map  
$$
\sigma_\mu: \CC(M') \to \G_{K}(M)
$$
defined as
$$
\sigma_\mu(\alpha) =  \alpha \cup \mu.
$$
By the above lemma the map $\sigma_\mu$ is a quasi-isometric embedding. 
\end{proof}

We observe that this implies that for infinite type $M$, all of the graphs $\G_K(M)$ for $K\neq \infty$ are of infinite diameter. We can now show that they also have infinite rank.

\begin{theorem}
For any infinite type $M$ and any $K\in \N$, $\rank(\G_K(M))= \infty$.
\end{theorem}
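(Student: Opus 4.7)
For every $n \in \N$ I aim to construct a quasi-isometric embedding $\N^n \hookrightarrow \G_K(M)$, which will give $\rank(\G_K(M)) = \infty$. The idea is to embed a product of $n$ curve graphs of disjoint complexity-$(K+1)$ finite type subsurfaces into $\G_K(M)$, in the spirit of the preceding corollary, and then use infinite-diameter geodesic rays in each factor.

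Fix $n$ and a geodesic pants decomposition $P$ of $M$ of uniformly bounded length (which exists since $M$ is upper bounded). Since $M$ is of infinite type, one can group pants of $P$ into $n$ pairwise disjoint connected subsurfaces $M_1, \ldots, M_n$, each of complexity $K+1$, whose boundaries lie in $P$. Let $\mu_0$ be the multicurve consisting of $\bigcup_i \partial M_i$ together with the curves of $P$ lying outside $\bigcup_i M_i$. Then $\mu_0$ inherits the length bound from $P$, and the complementary components of $M \setminus \mu_0$ of complexity greater than $K$ are exactly the $M_i$.

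Define $\sigma : \prod_{i=1}^n \CC(M_i) \to \G_K(M)$ by
$$
\sigma(\alpha_1, \ldots, \alpha_n) = \mu_0 \cup \alpha_1 \cup \cdots \cup \alpha_n.
$$
Cutting each complexity-$(K+1)$ subsurface $M_i$ along a single simple closed curve yields components of complexity at most $K$, so $\sigma(\vec\alpha) \in \G_K(M)$. A geodesic path in one factor $\CC(M_i)$ lifts to a path in $\G_K(M)$ of the same length: each edge in $\CC(M_i)$ corresponds to a pair of disjoint curves when $K \geq 1$, and to an elementary move on the complexity-$1$ subsurface $M_i$ when $K=0$, so replacing $\alpha_i$ while holding the other coordinates and $\mu_0$ fixed gives an edge in $\G_K(M)$. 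Concatenating lifts in the $n$ factors yields
$$
d_{\G_K(M)}(\sigma(\vec\alpha), \sigma(\vec\beta)) \leq \sum_{i=1}^n d_{\CC(M_i)}(\alpha_i, \beta_i).
$$
For the lower bound, note that $\sigma(\vec\alpha)$ meets $M_i$ only in $\alpha_i$, so $\pi_{M_i}(\sigma(\vec\alpha))$ coincides with $\alpha_i$ up to a uniform error in $\CC(M_i)$. Applying Lemma~\ref{lem:proj} coordinate by coordinate gives
$$
\max_i d_{\CC(M_i)}(\alpha_i, \beta_i) \leq 9\, d_{\G_K(M)}(\sigma(\vec\alpha), \sigma(\vec\beta)) + C
$$
for a uniform constant $C$. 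Taken together, $\sigma$ is a quasi-isometric embedding of $\prod_i \CC(M_i)$ equipped with the $L^1$ metric into $\G_K(M)$, with constants depending on $n$.

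To conclude, each $\CC(M_i)$ has infinite diameter (it is the Farey graph when $K=0$ and a higher-complexity curve graph otherwise), and hence admits an isometric embedding of $\N$ along a geodesic ray. Taking products and composing with $\sigma$ produces a quasi-isometric embedding $\N^n \hookrightarrow \G_K(M)$, and letting $n$ vary gives $\rank(\G_K(M)) = \infty$. The principal obstacle is the construction of $\mu_0$: it must be chosen so that every $\sigma(\vec\alpha)$ satisfies both the complexity and the length conditions defining $\G_K(M)$, which is why I draw $\mu_0$ from the single fixed upper-bounded pants decomposition $P$ rather than assembling it piecewise for each $\vec\alpha$.
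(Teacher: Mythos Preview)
Your proof is correct and follows essentially the same strategy as the paper: choose $n$ disjoint complexity-$(K+1)$ subsurfaces, embed the product of their curve graphs via a fixed complementary multicurve, use direct path-lifting for the upper distance bound and the projection Lemma~\ref{lem:proj} for the lower bound, and then pull back geodesics in the factors to get a quasi-flat. The only cosmetic differences are that the paper phrases the product with the $\ell_\infty$ metric and embeds $\Z^n$ via bi-infinite geodesics, whereas you use the $L^1$ metric and rays $\N^n$; for fixed $n$ these are quasi-isometric, so the conclusion is the same.
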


\begin{proof}
As $M$ is of infinite type, for any $K$ we can find an infinite set of subsurfaces $M_i, \, i \in \N^{*}$ such that $M_i \cap M_j = \emptyset $ for $i\neq j$ and $\kappa(M_i) = K+1$ for all $i\in \N^{*}$.

Now for any $n\in \N^{*}$, we consider bi-infinite geodesics $\gamma_i$ on each $\CC(M_i)$, $i=1,\hdots,n$. A choice of origin on each of them and a direction gives us a natural embedding of the set of points of $\Z^n$ into $\G_K(M)$ as we shall explain. This embedding will be a quasi-isometry as in the previous corollary. 

We consider the following $\ell_\infty$ metric on the product $P_n$ of these curve graphs. More precisely, let
$$P_n:=\Pi_{1\leq i \leq n} \CC(M_i)$$
be endowed with the following metric: two elements $(\alpha_1,\hdots,\alpha_n)$, $(\alpha_1,\hdots,\alpha_n)$ are at distance $1$ if 
$$
\max_{i=1,\hdots,n} d_{\CC(M_i)}(\alpha_i,\beta_i) = 1.
$$
In this metric space, the restriction to the metric on the embedding of $\Z^n$ is the $\ell_\infty$ metric, but is naturally quasi-isometric to the usual metric on $\Z^n$. As such we have a quasi-isometric embedding of $\Z^n$ in $P_n$.

Using Lemma \ref{lem:proj}, we now get a distance (quasi) non increasing map from $\G_K(M)$ to $P_n$. As in the previous corollary, by choosing a pants decomposition on the complementary region of the $M_i$, $i=1,\hdots,n$, we can quasi-embed $P_n$ into $\G_K(M)$. In turn this provides the quasi-isometric embedding of $P_n$ - and thus of $\Z^n$ - we were looking for. As this can be done for any $n$, the theorem is proved.
\end{proof}

\section{A case of finite diameter}

We conclude the article by studying a particular example of infinite type surface $Z$ and show that for this surface $\G_\infty(Z)$ has diameter at most $3$. The reason the example is intriguing is that $\G_\infty(Z)$ is in some sense the limit of infinite rank metric spaces. 

We describe the surface $Z$ in terms of Fenchel-Nielsen coordinates. Consider the infinite cubic graph as in Figure \ref{fig:graph}.

\begin{figure}[htbp]
\begin{center}
\includegraphics[width=14cm]{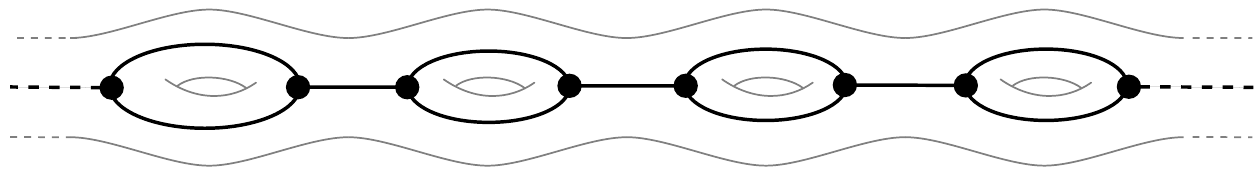}
\caption{An infinite cubic graph and the surface $Z$ lurking behind}
\label{fig:graph}
\end{center}
\end{figure}

This is the graph dual to a pants decomposition (vertices correspond to pants and edges to pants curves). The surface locally looks like Figures \ref{fig:graph} and  \ref{fig:bigsurface}. We now construct $M$ by taking each pair of pants to have all three lengths equal to a fixed constant $\ell_0$ and twist parameters equal to $0$.

\begin{figure}[htbp]
\begin{center}
\includegraphics[width=14cm]{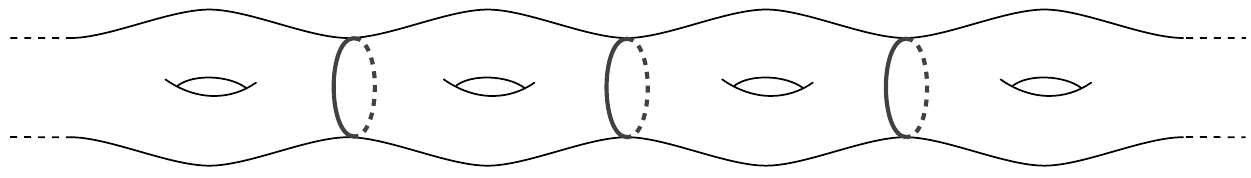}
\caption{The surface $Z$ with the curves $\gamma_i$}
\label{fig:bigsurface}
\end{center}
\end{figure}

We will need the curves $\gamma_i, \, i \in \Z$ in the sequel: these are the curves corresponding to the separating edges of the cubic graph (they are indicated on Figure \ref{fig:bigsurface}). We order them by arbitrarily choosing a $\gamma_0$ and by asking that $\gamma_i$ separates $\gamma_{i-1}$ from $\gamma_{i+1}$ for all $i$. 

Although we have constructed $M$ explicitly, the arguments we use are clearly adaptable to other surfaces, including any bi-lipschitz equivalent surface. 

We now prove the result.

\begin{theorem}\label{thm:Z}
Any two elements of $\G_\infty(Z)$ are distance at most $3$ apart.
\end{theorem}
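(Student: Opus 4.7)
The plan is to show $\diam(\G_\infty(Z)) \leq 3$ by constructing, for any two vertices $v, w \in \G_\infty(Z)$, intermediate vertices $\nu_v, \nu_w \in \G_\infty(Z)$ fitting into a path $v - \nu_v - \nu_w - w$. The central tool is the rigidity of $Z$: every pair of pants is isometric with boundary lengths $\ell_0$ and zero twists, so each backbone curve $\gamma_i$ has length $\ell_0$ and, by the collar lemma, a uniform collar of width $w_0 > 0$; these collars are pairwise disjoint.

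The main structural step is a lemma asserting that every $v \in \G_\infty(Z)$ contains as a sub-multicurve a bi-infinite subsequence $\{\gamma_{i_k}\}_{k \in \Z}$ of the backbone. The argument has two ingredients. First, the collar lemma bounds how each individual curve of $v$ crosses the backbone: a curve of length at most $L$ can cross at most $L/(2w_0)$ of the $\gamma_i$'s. Second, the bounded-complexity condition on the complementary pieces of $v$ forces $v$ to contain infinitely many end-separating curves in both directions (otherwise a complementary component would be of infinite type), and any geodesic separating the two ends of $Z$ into two infinite-type pieces must be one of the $\gamma_i$'s, since those are the only such isotopy classes in $Z$.

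Given this, define $\nu_v$ as the sub-multicurve of $v$ consisting of the backbone curves in $v$ together with just enough additional curves of $v$ to keep the complementary pieces of $\nu_v$ of bounded complexity (this is needed because the gaps $i_{k+1}-i_k$ need not be bounded). As $\nu_v \subset v$, we have $d(v,\nu_v) \leq 1$; construct $\nu_w$ analogously. Both $\nu_v$ and $\nu_w$ are supported mostly on the backbone $\{\gamma_i\}$, which is already a single pairwise-disjoint family, and their finitely many non-backbone curves can be coordinated, by making coherent choices of fill-in pants decompositions inside a large enough finite-type subsurface of $Z$ containing the discrepancies, so that $\nu_v \cup \nu_w$ is a multicurve; this gives $d(\nu_v,\nu_w) \leq 1$ and hence $d(v,w) \leq 3$.

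The main obstacle is the structural lemma combined with the coordinated construction of $\nu_v, \nu_w$. The subtle point is that $v$ may have infinitely many curves, and only the cumulative geometric rigidity of $Z$ (not just the collar lemma applied to single curves) forces the separating curves of $v$ into the backbone family $\{\gamma_i\}$. Coordinating the non-backbone parts of $\nu_v$ and $\nu_w$ to be disjoint is the other delicate point, and may require slightly enlarging the finite-type subsurface in which the coordination takes place, which is where the full uniformity of the Fenchel--Nielsen parameters of $Z$ is essential.
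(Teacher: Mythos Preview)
Your structural lemma is false, and the proof collapses without it. You assert that ``any geodesic separating the two ends of $Z$ into two infinite-type pieces must be one of the $\gamma_i$'s, since those are the only such isotopy classes in $Z$.'' This is not true: there are infinitely many isotopy classes of simple closed curves on $Z$ that separate the two ends, and the $\gamma_i$ are merely one bi-infinite family among them. For a concrete counterexample to the lemma itself, start from the standard pants decomposition $P_0$ of $Z$ (the one containing all the $\gamma_i$), and for each $i$ perform an elementary move replacing $\gamma_i$ by a curve $\gamma_i'$ in the four-holed sphere formed by the two pants adjacent to $\gamma_i$. These four-holed spheres are pairwise disjoint, so the moves can be done simultaneously; by the uniform geometry of $Z$ the resulting pants decomposition $P_0'$ still has uniformly bounded curve lengths, hence lies in $\G_\infty(Z)$, yet it contains no $\gamma_i$ whatsoever. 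Your subsequent construction of $\nu_v$ then has no backbone curves to work with, so ``finitely many non-backbone curves'' becomes ``all of them,'' and the coordination step (which in any case is vague, since $\nu_v\subset v$ and $\nu_w\subset w$ cannot be freely modified) has no chance.

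The paper's argument avoids this trap by never assuming that $v$ or $w$ contain any $\gamma_i$. Instead, for each backbone curve $\gamma_i$ it takes the minimal subsurface $Z_{v,i}\subset Z$ containing $\gamma_i$ together with every curve of $v$ that meets $\gamma_i$; the boundary $\partial Z_{v,i}$ then consists of curves of $v$ (not backbone curves) which do separate the ends. The key geometric point is that if $d_Z(\gamma_i,\gamma_j)>L$ (where $L$ bounds all curve lengths in $v$ and $w$), then $Z_{v,i}$ and $Z_{w,j}$ are disjoint. Choosing a sparse bi-infinite subfamily $\gamma_{i_k}$ with consecutive distances exceeding $L$, one sets $v'=\bigcup_{k\text{ even}}\partial Z_{v,i_k}\subset v$ and $w'=\bigcup_{k\text{ odd}}\partial Z_{w,i_k}\subset w$; the even/odd alternation forces $v'$ and $w'$ to be disjoint, giving the path $\mu,v',w',\nu$. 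So the end-separating curves you need do exist inside $v$, but they must be \emph{found} via the subsurfaces $Z_{v,i}$ rather than assumed to lie on the fixed backbone.
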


\begin{proof}
For $\mu,\nu \in \G_\infty(Z)$, we consider pants decompositions $v,w \in  \G_\infty(Z)$ such that $\mu\subset v$, $\nu\subset w$. We set 
$$L_v:= \sup\{ \ell(\alpha) \,|\, \alpha \in v \mbox{ is a connected component of }v\} 
$$
and similarly for $L_w$. We now set $L:=\max\{L_v,L_w\}$. 

Now for $x\in \{v,w\}$ and for any $\gamma_i$ defined as above, we consider the minimal finite subsurface $Z_{x,i}$ of $Z$ which contains all curves of $x$ that intersect $\gamma_i$ and the curve $\gamma_i$ itself. 

The surface $Z_{x,i}$ enjoys certain properties. As it contains $\gamma_i$, it separates the surface $Z$ and two of the connected components of $Z\setminus Z_{x,i}$ are infinite. As such the curves of $\partial Z_{x,i}$ also enjoy this property and note that they belong to the pants decomposition $x$.

It is of finite complexity bounded by a function of $L$. 
One way to see that the complexity is bounded is via the collar lemma: the number of intersection points between $x$ and $\gamma_i$ is bounded above by a function of $L$. It follows that the number of curves of $x$ that intersect $\gamma_i$ is bounded as well.

We now show that if $d_{Z}(\gamma_i, \gamma_j) > L$, then $Z_{x,i}$ and $Z_{y,j}$ for $x,y\in \{v,w\}$ are disjoint. Indeed, any curve contained inside $Z_{x,i}$ can be formed by arcs of curves of length at most $L$ and $\gamma_i$. As such they can be isotopically realized by curves that live in the subset of $Z$ consisting of points of distance at most $\frac{L}{2}$. By applying the same argument to $Z_{y,j}$ any curve of $Z_{x,i}$ can be realized disjointly from any curve of $Z_{y,j}$ and thus they do not intersect. 

We can now prove the main result. We consider a subset $\gamma_{i_k}, k \in \Z$ of the separating curves such that $d_Z(\gamma_{i_{k}}, \gamma_{i_{k+1}}) > L$. We also choose them so that 
$$
\sup_{k\in \Z}\{ d_Z(\gamma_{i_{k}}, \gamma_{i_{k+1}})\} < +\infty.
$$

For even $k\in \Z$ we consider the multicurve $v'\subset v$ obtained by considering the union of the (geodesic realizations of) the curves in $\cup_{k\in 2\Z} \partial Z_{v,i_{k}}$. By construction, $v'\in \G_\infty(Z)$ and $v$ and $v'$ span an edge (and so do $\mu$ and $v'$). We construct $w'$ in an analogous way by considering the curves in $\partial Z_{v,i_{k}}$ for odd $k$. By construction, the multicurves $v'$ and $w'$ span an edge. So we have a path $\mu, v', w', \nu$ and this completes the proof.
\end{proof}

\addcontentsline{toc}{section}{References}
\bibliographystyle{plain}

\begin{thebibliography}{1}

\bibitem{AlessandriniEtAl2}
Daniele Alessandrini, Lixin Liu, Athanase Papadopoulos, and Weixu Su.
\newblock The behaviour of {F}enchel-{N}ielsen distance under a change of pants
  decomposition.
\newblock {\em Comm. Anal. Geom.}, 20(2):369--394, 2012.

\bibitem{AlessandriniEtAl}
Daniele Alessandrini, Lixin Liu, Athanase Papadopoulos, Weixu Su, and Zongliang
  Sun.
\newblock On {F}enchel-{N}ielsen coordinates on {T}eichm\"uller spaces of
  surfaces of infinite type.
\newblock {\em Ann. Acad. Sci. Fenn. Math.}, 36(2):621--659, 2011.

\bibitem{AndersonParlierPettet}
J.~W. Anderson, H.~Parlier, and A.~Pettet.
\newblock Relative shapes of thick subsets of moduli space.
\newblock {\em arXiv:1306.6146 [math.GT]}, 2013.

\bibitem{BalacheffParlierSabourau}
F.~Balacheff, H.~Parlier, and S.~Sabourau.
\newblock Short loop decompositions of surfaces and the geometry of jacobians.
\newblock {\em Geom. Funct. Anal.}, 22(1):37--73, 2012.

\bibitem{Basmajian}
Ara Basmajian.
\newblock Large parameter spaces of quasiconformally distinct hyperbolic
  structures.
\newblock {\em J. Anal. Math.}, 71:75--85, 1997.

\bibitem{Basmajian-Kim}
Ara Basmajian and Youngju Kim.
\newblock Geometrically infinite surfaces with discrete length spectra.
\newblock {\em Geom. Dedicata}, 137:219--240, 2008.

\bibitem{Bavard}
J.~Bavard.
\newblock Hyperbolicit\'e du graphe des rayons et quasi-morphismes sur un gros
  groupe modulaire.
\newblock {\em arXiv:1409.6566 [math.DS]}, 2014.

\bibitem{Bowditch}
B.~H. Bowditch.
\newblock Intersection numbers and the hyperbolicity of the curve complex.
\newblock {\em Journal f\"ur die reine und angewandte Mathematik (Crelles
  Journal)}, 598:105--129, 2006.

\end{thebibliography}

\end{document}